\documentclass[a4paper,10pt]{article}
\usepackage[utf8]{inputenc}

\usepackage{amsmath}
\usepackage{amsfonts}
\usepackage{amssymb}
\usepackage{amsthm}
\usepackage{graphicx}
\usepackage{amscd}

\title{Random-Walks on Graphs and Approximation of $l_2$-Invariants}
\date{}
\author{Zenas Wong, Andrew Kricker}

\begin{document}

\maketitle

\newcommand{\C}{\mathbb{C}}
\newcommand{\norm}[1]{\lVert #1 \rVert}
\newcommand{\bol}[1]{\textbf{#1}}
\newcommand{\vonNeumannAlg}[1]{\mathcal{N}(#1)}
\newcommand{\inp}[2]{\langle #1, #2 \rangle}
\newcommand{\N}{\mathbb{N}}
\newcommand{\vnd}[1]{\dim_{\vonNeumannAlg{G}}(#1)}
\newcommand{\R}{\mathbb{R}}
\newcommand{\Z}{\mathbb{Z}}
\newcommand{\mac}[1]{\mathcal{#1}}
\newcommand{\mab}[1]{\mathbf{#1}}
\newcommand{\bracket}[1]{\langle #1 \rangle}
\newcommand{\vontrace}[2]{trace_{\vonNeumannAlg{#1}}(#2)}
\newcommand{\vondim}[2]{\dim_{\vonNeumannAlg{#1}}(#2)}
\newcommand{\Q}{\mathbb{Q}}
\newcommand{\F}{\mathbb{F}}
\newcommand{\T}{\mathbb{T}}
\newcommand{\convol}[1]{\overline{\mac{LCMG}(#1)}}
\newcommand{\contrace}[2]{trace_{\convol{#1}}(#2)}
\newcommand{\condim}[2]{\dim_{\convol{#1}}(#2)}
\newtheorem{theorem}{Theorem}
\newtheorem{lemma}{Lemma}
\newtheorem{coroll}{Corollary}
\newtheorem{propos}{Proposition}
\newtheorem{example}{Example}
\newtheorem{conjecture}{Conjecture}
\newtheorem{define}{Definition}
\newtheorem{remark}{Remark}
\newtheorem*{setting}{Setting}

\begin{abstract}
Right multiplication operators $R_w: l_2G \rightarrow l_2G$, $w \in \C[G]$, are interpreted as random-walk operators on labelled graphs that are analogous to Cayley graphs. Applying a generalization of the graph convergence defined by R. Grigorchuk and A. \.{Z}uk \cite{Grigorchuk_Zuk_1} gives a new proof and interpretation of a special case of W. L\"{u}ck's famous Theorem on the Approximation of $l_2$-Betti numbers for countable residually finite groups. In particular, using this interpretation, the proof follows quickly from standard theorems about the weak convergence of probability measures that are characterized by their moments.

\end{abstract}

\section{Introduction}

A key area of interest in the theory of $l_2$-invariants is the study of the spectral density function $F(R_w): [0,\infty) \rightarrow [0,\infty)$ of a $G$-equivariant bounded operator $R_w: l_2G \rightarrow l_2G$, given by right multiplication of $w$ (an element of the complex group ring $\C[G]$) on the Hilbert space $l_2G$ of square summable formal sums: $$l_2G = \{\sum_{g \in G} \lambda_g g : \lambda_g \in \C, \sum_{g \in G} |\lambda_g|^2 < \infty \}.$$ $F(R_w)$ is used to define $l_2$-invariants of $R_w$, such as the $l_2$-Betti number, Novikov-Shubin invariant, and Fuglede-Kadison determinant.\bigskip

In general, $F(R_w)$ is difficult to compute, and there are few instances of actual computations of the $l_2$-invariants of right multiplication operators. For example, for any $w \in \C[G]$ with $G = \Z^d$, recent work by W. L\"{u}ck \cite{WLuck_SDF_Estimates_FAG} provided bounds on the spectral density function of $R_w$, and established that the Novikov-Shubin invariant and Fuglede-Kadison determinant of $R_w$ are both positive. In \cite{Grabowski1}, L. Grabowski discovered particular groups $G$ for which certain elements $w \in \C[G]$ are such that the $l_2$-Betti number of $R_w$ is irrational.\bigskip

This paper studies the spectral density function of $R_w$ using the theory of random walk (or Markov) operators of groups. This relies on  \cite{Grigorchuk_Zuk_1}, where R. Grigorchuk and A. \.{Z}uk defined convergence of sequences of locally finite marked graphs, and showed that the spectral measure associated to the random-walk (or Markov) operator of such a graph is continuous with respect to this type of graph convergence. This was subsequently used in \cite{Grigorchuk_Zuk_2} to explicitly compute the spectrum of a Markov operator of the lamplighter group, and the approach was further extended in \cite{KambitesPedroSilvaSteinberg} to Markov operators of groups generated by Cayley automata.\bigskip

An important connection between these 2 theories is that Markov operators of a finitely generated group $G$ set up the following Correspondence $(\star)$ between:

\begin{enumerate}
 \item Random walks on the Cayley graph $\Gamma_{G,S}$ of $G$ with respect to a finite symmetric generating set $S$, and
 \item $G$-equivariant bounded operators $R_m: l_2G \rightarrow l_2G$, given by right multiplication of the self-adjoint element $m = \frac{1}{|S|} \sum_{s \in S} s \in \C[G]$. $R_m$ is known as the random-walk operator (or Markov operator) associated to the Cayley graph $\Gamma_{G,S}$.
\end{enumerate}

In particular, computing the spectral measures of random walk operators on the Cayley graph $\Gamma_{G,S}$ gives information on the spectral density function of $R_m$ and the associated $l_2$-invariants. For example, the main result of \cite{Grigorchuk_Zuk_2} was used to give a counter-example \cite{Grigorchuk_Zuk_Linnell_Schick} to the Strong Atiyah Conjecture concerning the range of values of $l_2$-Betti numbers.\bigskip

The aim of this paper is to extend this correspondence to self-adjoint elements $z = w w^\ast \in \C[G]$ for an arbitrary element $w$ of $\C[G]$, and use it to study $F(R_w)$. The approach is to introduce a graph $\Gamma$ that is analogous to Cayley graphs, such that right multiplication by $w$ is the random walk operator associated to $\Gamma$. In keeping with Correspondence $(\star)$, $R_w$ will be called the Markov-type operator associated to $\Gamma$. When $G$ is a finitely generated residually finite group, this approach yields a new proof for the pointwise almost-everywhere convergence to $F(R_w)$ by means of finite-dimensional analogues. To explain this, one will fix the following notations:

\begin{setting}
\label{MainSetting}
Let $G$ be a finitely generated residually finite group. Suppose that $G$ has a nested sequence of finite index normal subgroups: $$\{K_n \triangleleft G : [G:K_n] < \infty, \  K_n \supseteq K_{n+1}\}_{n \in \N},$$ such that $\bigcap_{n \in \N} K_n = \{1\}$. For $A \in M_{a \times b}(\C[G])$, let $R_A: (l_2G)^a \rightarrow (l_2G)^b$ be the operator given by right multiplication by $A$. Set: $$A_n = \pi_n(A) \in M_{a \times b}(\C[G/K_n]),$$ obtained by applying the canonical projection $\pi_n: \C[G] \rightarrow \C[G/K_n]$ to the entries of $A$, and let $R_{A_n}: l_2(G/K_n)^a \rightarrow l_2(G/K_n)^b$ be the associated right multiplication operator. 
\end{setting}



The main result is a new proof of the following theorem:

\begin{theorem}[Pointwise a.e. Convergence of Spectral Density Functions]
\label{PointwiseConvergence_SDF_ResiduallyFiniteGroup}
Let $G$ be a countable residually finite group as described in the Setting, and let $A$ and $A_n$ be given by $w \in \C[G]$ and $w_n =  \pi_n(w)$ respectively (i.e. when $a = b = 1$). Let $F(R_w)(\lambda)$ and $F(R_{w_n})(\lambda)$ be the spectral density functions of $R_w$ and $R_{w_n}$ respectively. Then for all $\lambda \geq 0$ at which $F(R_w)$ is continuous, one has that $$\lim_{n \rightarrow \infty} F(R_{w_n})(\lambda) = F(R_w)(\lambda).$$
\end{theorem}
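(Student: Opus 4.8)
The plan is to reinterpret both spectral density functions as cumulative distribution functions of probability measures on $[0,\infty)$ and to establish convergence by the \emph{method of moments}. Recall that the spectral density function of $R_w$ is built from the positive self-adjoint operator $R_w^\ast R_w = R_{w w^\ast}$: writing $\{E_t\}_{t \geq 0}$ for its spectral family, one has $F(R_w)(\lambda) = \vontrace{G}{E_{\lambda^2}}$. Thus, defining the \emph{spectral measure} $\mu_w$ on $[0,\infty)$ by $\mu_w([0,t]) = \vontrace{G}{E_t}$, we get $F(R_w)(\lambda) = \mu_w([0,\lambda^2])$, and likewise $F(R_{w_n})(\lambda) = \mu_{w_n}([0,\lambda^2])$, where $\mu_{w_n}$ is the spectral measure of $R_{w_n}^\ast R_{w_n} = R_{w_n w_n^\ast}$ computed with the canonical normalized trace on the finite group $G/K_n$ (concretely, $\mu_{w_n}$ is the normalized eigenvalue-counting measure of $R_{w_n w_n^\ast}$). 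Since $\lambda \mapsto \lambda^2$ is a homeomorphism of $[0,\infty)$, it suffices to show that $\mu_{w_n} \to \mu_w$ weakly and then to invoke the equivalence of weak convergence with pointwise convergence of distribution functions at continuity points of the limit.

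The heart of the matter is the convergence of moments. By the functional calculus the $k$-th moment of $\mu_w$ is $\int t^k \, d\mu_w(t) = \vontrace{G}{(R_{w w^\ast})^k} = \vontrace{G}{R_{(w w^\ast)^k}}$, and the von Neumann trace of a right-multiplication operator $R_z$ with $z \in \C[G]$ is simply the coefficient of the identity element of $G$ in $z$, namely $\vontrace{G}{R_z} = \inp{R_z \delta_e}{\delta_e} = z_e$. The analogous computation with the normalized trace on $\C[G/K_n]$ shows that the $k$-th moment of $\mu_{w_n}$ is the coefficient of the identity coset in $\pi_n((w w^\ast)^k) = (w_n w_n^\ast)^k$, which equals $\sum_{g \in K_n} \big((w w^\ast)^k\big)_g$. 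Here residual finiteness is the decisive input: the element $(w w^\ast)^k \in \C[G]$ has finite support, the subgroups $K_n$ are nested with $\bigcap_n K_n = \{1\}$, so for all sufficiently large $n$ the identity is the only support element lying in $K_n$. Hence for each fixed $k$ the $k$-th moment of $\mu_{w_n}$ is \emph{eventually equal} to that of $\mu_w$; in particular the moments converge.

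To pass from moment convergence to weak convergence I need the measures to be uniformly compactly supported and the limit to be determined by its moments. Both follow from the uniform operator-norm bound $\norm{R_{w_n}} \leq \norm{w_n}_1 \leq \norm{w}_1$, where $\norm{\cdot}_1$ denotes the $l^1$-norm of the coefficients (the second inequality holds because projecting to $G/K_n$ can only merge coefficients). Consequently $\mu_w$ and every $\mu_{w_n}$ is a probability measure --- the zeroth moment is $\vontrace{G}{\mathrm{Id}} = 1$ --- supported in the fixed interval $[0,\norm{w}_1^2]$. A measure supported in a bounded interval is uniquely determined by its moments, and for a family of measures sharing a common compact support, convergence of all moments implies weak convergence; this is precisely the standard moment-method result alluded to in the abstract. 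Weak convergence then gives $\mu_{w_n}([0,\lambda^2]) \to \mu_w([0,\lambda^2])$ at every point $\lambda^2$ of continuity of $t \mapsto \mu_w([0,t])$, equivalently at every $\lambda \geq 0$ at which $F(R_w)$ is continuous, which completes the argument.

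I expect the main obstacle to be careful bookkeeping rather than a deep difficulty: one must rigorously justify the identification of spectral moments with traces (through the spectral theorem for the bounded self-adjoint operator $R_{w w^\ast}$) and verify that $\pi_n$ is a $\ast$-homomorphism commuting with the formation of powers, so that the finite-dimensional moments really are the projections of their infinite-dimensional counterparts. With these identifications secured, the residual-finiteness input makes the convergence of moments almost immediate, and the remainder is the off-the-shelf method of moments for compactly supported probability measures.
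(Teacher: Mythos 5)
Your proof is correct, and its analytic skeleton coincides with the paper's: both reduce the theorem to eventual equality of the moments of the spectral measures of $R_{w_nw_n^\ast}$ and $R_{ww^\ast}$, then apply the method of moments for measures characterized by their moments (Theorem \ref{MomentsConvergence_CharacterizedBy_Moments_WeakConvergence}) together with the equivalence of weak convergence and pointwise convergence of distribution functions at continuity points of the limit (Proposition \ref{WeakConvergenceMeasures_CDFPointwiseConvergence_Equiv}). The genuine difference is how the moment identity is obtained. The paper routes it through its geometric machinery: it realizes $R_z$, $z = ww^\ast$, as the Markov-type operator of the Cayley lcmg $\Gamma_{z,S}$, proves via residual finiteness that the finite graphs $\Gamma_{z_n,\pi_n(S)}$ converge to $\Gamma_{z,S}$ (Proposition \ref{ResiduallyFinite_Cayleylcmg_Convergence}), and invokes Theorem \ref{lcmgConvergence_Implies_KSMConvergence}, whose proof expresses the $k$th moment as a sum over closed walks of length $k$ at the basepoint, a quantity determined by the ball of radius $k$. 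You bypass the graphs entirely: the $k$th moment of $\mu_w$ is $\vontrace{G}{R_{(ww^\ast)^k}}$, the coefficient of $1$ in $(ww^\ast)^k$; the $k$th moment of $\mu_{w_n}$ is $\sum_{g \in K_n}\bigl((ww^\ast)^k\bigr)_g$; and since $(ww^\ast)^k$ has finite support while the nested $K_n$ have trivial intersection, the two agree for all large $n$. These are the same numbers counted in two ways --- closed walks of length $k$ in $\Gamma_{z,S}$ based at $1$ correspond exactly to the words contributing to the identity coefficient of $z^k$, and your support condition $\mathrm{supp}\bigl((ww^\ast)^k\bigr) \cap K_n \subseteq \{1\}$ is precisely the injectivity-on-balls statement the paper extracts from residual finiteness. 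Your route is shorter and self-contained, requiring no lcmg formalism or isomorphism-of-balls bookkeeping; the paper's route is longer but yields a reusable geometric statement (weak continuity of Kesten spectral measures under lcmg convergence) that applies to graphs not arising from group rings, and this interpretation --- connecting the approximation theorem to the Grigorchuk--\.{Z}uk circle of ideas --- is the stated purpose of the paper. One caveat, shared equally by the paper's proof: at $\lambda = 0$, ``continuity of $F(R_w)$'' must be read as continuity at $t = 0$ of the distribution function $t \mapsto \mu_w([0,t])$ on all of $\R$, i.e. $\mu_w(\{0\}) = 0$; weak convergence gives nothing at an atom of the limit measure, which is why Theorem \ref{ApproximationConjecture_L2BettiNumbers} requires additional input beyond this argument.
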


A corollary of Theorem \ref{PointwiseConvergence_SDF_ResiduallyFiniteGroup} is a special case (for $a = b = 1$) of a famous theorem of W. L\"{u}ck \cite{WLuck_Approximation_FDAnalogues,WLuck_Approximation_Survey} concerning the approximation of the $l_2$-Betti number $b^{(2)}(R_A)$ by the sequence of $l_2$-Betti numbers $b^{(2)}(R_{A_n})$ arising from the finite dimensional operators $R_{A_n}$.

\begin{theorem}[W. L\"{u}ck]
\label{ApproximationConjecture_L2BettiNumbers}
Let $G$ be as described in the Setting. For $A \in M_{a \times b}(\Q[G])$, let $b^{(2)}(R_A)$ and $b^{(2)}(R_{A_n})$ be the $l_2$-Betti numbers of $R_A$ and $R_{A_n}$ respectively. Then: \[b^{(2)}(R_A) = \lim_{n \rightarrow \infty} b^{(2)}(R_{A_n}).\]



\end{theorem}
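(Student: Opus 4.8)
The plan is to express each $l_2$-Betti number as the mass that a spectral measure places at the single point $0$, and then to run exactly the moment-and-weak-convergence argument of Theorem \ref{PointwiseConvergence_SDF_ResiduallyFiniteGroup} at the level of the matrix $\ast$-algebra $M_a(\C[G])$; the one genuinely new ingredient, needed precisely because an atom at $0$ is not a continuity-point value, is an arithmetic estimate forbidding spectral mass from creeping down to $0$ uniformly in $n$. Concretely, set $z = A A^\ast \in M_a(\C[G])$, so that the positive operator $R_A^\ast R_A$ on $(l_2G)^a$ is $R_z$ and $\ker R_A = \ker R_z$; then $b^{(2)}(R_A) = \dim_{\mathcal{N}(G)}\ker R_A = \mu(\{0\})$, where $\mu$ is the spectral measure of $R_z$ taken with respect to the matrix trace $\mathrm{tr}_{M_a(\mathcal{N}(G))} = \sum_{i=1}^a \mathrm{tr}_{\mathcal{N}(G)}$, a finite measure of total mass $a$ supported in $[0,K]$ with $K = \norm{R_A}^2$. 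Likewise $b^{(2)}(R_{A_n}) = \mu_n(\{0\})$, where $\mu_n$ is the spectral measure of the finite-dimensional positive operator $R_{A_n}^\ast R_{A_n}$, a sum of point masses at its eigenvalues weighted by the normalized count $1/[G:K_n]$.

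First I would prove convergence of all moments, $\mathrm{tr}_{M_a(\mathcal{N}(G/K_n))}\big((A_n A_n^\ast)^k\big) \to \mathrm{tr}_{M_a(\mathcal{N}(G))}\big((A A^\ast)^k\big)$ for every $k \in \N$. Each such moment is the sum over $i$ of the coefficient at $1 \in G$ of the $(i,i)$ entry of the group-ring matrix $(A A^\ast)^k$, a finite $\C$-combination of finitely many coefficients indexed by finitely many group elements; since $\bigcap_n K_n = \{1\}$, the projection $\pi_n$ is eventually injective on those elements, so the quotient moments agree with the ambient ones for all large $n$. This is just the matrix form of the moment computation underlying Theorem \ref{PointwiseConvergence_SDF_ResiduallyFiniteGroup}. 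Because $\norm{R_{A_n}} \le \norm{R_A}$, all of $\mu$ and the $\mu_n$ are supported in the common compact interval $[0,K]$ and are determined by their moments; the same Weierstrass-approximation argument used in the scalar case then upgrades moment convergence to weak convergence $\mu_n \Rightarrow \mu$.

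Weak convergence delivers one inequality for free: since $\{0\}$ is closed, the portmanteau theorem gives $\limsup_n \mu_n(\{0\}) \le \mu(\{0\})$, that is $\limsup_n b^{(2)}(R_{A_n}) \le b^{(2)}(R_A)$. For the reverse inequality, fix a continuity point $\epsilon^2 > 0$ of the distribution function of $\mu$, so that $\mu_n([0,\epsilon^2]) \to \mu([0,\epsilon^2])$. Writing $\mu_n(\{0\}) = \mu_n([0,\epsilon^2]) - \mu_n\big((0,\epsilon^2]\big)$ and passing to $\liminf$ gives $\liminf_n b^{(2)}(R_{A_n}) \ge \mu([0,\epsilon^2]) - \sup_n \mu_n\big((0,\epsilon^2]\big)$. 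Since $\mu([0,\epsilon^2]) \downarrow \mu(\{0\})$ as $\epsilon \to 0^+$, the desired bound $\liminf_n b^{(2)}(R_{A_n}) \ge b^{(2)}(R_A)$ will follow once I establish that spectral mass does not accumulate immediately above $0$ uniformly in $n$, i.e. $\sup_n \mu_n\big((0,\delta]\big) \to 0$ as $\delta \to 0^+$.

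This uniform estimate is the crux, and it is the only point at which the hypothesis $A \in M_{a\times b}(\Q[G])$ enters; it cannot come from weak convergence alone. I would deduce it from a uniform lower bound on the logarithmic integrals (equivalently, the Fuglede--Kadison determinants) $\int_{(0,K]} \ln(x)\, d\mu_n(x) \ge -C$ with $C$ independent of $n$. After clearing denominators one may assume $A$ has entries in $\Z[G]$, whence $A_n A_n^\ast \in M_a(\Z[G/K_n])$ and $R_{A_n}^\ast R_{A_n}$ is represented by a symmetric integer matrix of size $a\,[G:K_n]$; the lowest non-vanishing coefficient of its characteristic polynomial is then a nonzero integer, so the product of the nonzero eigenvalues is bounded below, and after normalizing by $[G:K_n]$ and accounting for the clearing factor one obtains the claimed uniform bound on $\int \ln x\, d\mu_n$. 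A Chebyshev-type inequality $(-\ln\delta)\,\mu_n\big((0,\delta]\big) \le \int_{(0,\delta]}(-\ln x)\,d\mu_n \le C'$ then forces $\mu_n\big((0,\delta]\big) \le C'/(-\ln\delta)$ uniformly in $n$, exactly the smallness required. I expect this determinant estimate to be the main obstacle: the spectral and weak-convergence steps transfer from the scalar theorem almost verbatim, but controlling the eigenvalues near zero is the genuine arithmetic input that separates the $l_2$-Betti number, an atom at $0$, from the continuity-point values handled by Theorem \ref{PointwiseConvergence_SDF_ResiduallyFiniteGroup}. Combining the two inequalities yields $b^{(2)}(R_A) = \lim_{n\to\infty} b^{(2)}(R_{A_n})$.
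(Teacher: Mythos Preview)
Your proposal is correct, and it in fact supplies substantially more than the paper does. The paper does not give its own proof of Theorem~\ref{ApproximationConjecture_L2BettiNumbers}: it is stated as L\"uck's theorem with citations to \cite{WLuck_Approximation_FDAnalogues,WLuck_Approximation_Survey}, and the only claim made in the paper is that the $a=b=1$ case is ``a corollary'' of Theorem~\ref{PointwiseConvergence_SDF_ResiduallyFiniteGroup}, with no argument offered for how one passes from convergence at continuity points of $F(R_w)$ to convergence at $\lambda=0$.

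You have put your finger on precisely the issue that the paper glosses over. The spectral measure of $R_{ww^\ast}$ is supported in $[0,\infty)$, so its CDF is continuous at $0$ if and only if $\mu(\{0\})=b^{(2)}(R_w)=0$; hence Theorem~\ref{PointwiseConvergence_SDF_ResiduallyFiniteGroup} by itself yields the Betti number statement only in the trivial case $b^{(2)}(R_w)=0$. Your argument keeps the moment/weak-convergence portion of the paper (extended harmlessly to the matrix trace on $M_a(\mathcal{N}(G))$), then obtains $\limsup_n \mu_n(\{0\})\le\mu(\{0\})$ from portmanteau, and for the reverse inequality invokes the uniform integrality bound $\sup_n \mu_n((0,\delta])\le C'/(-\ln\delta)$ coming from the fact that, after clearing denominators, the product of the nonzero eigenvalues of the integer matrix representing $R_{A_n}^\ast R_{A_n}$ is a nonzero integer. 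This last step is exactly L\"uck's original determinant estimate and is the only place the hypothesis $A\in M_{a\times b}(\Q[G])$ is used. So your route is: (paper's moment argument) $+$ (L\"uck's arithmetic estimate), which is a complete proof; the paper's route for this theorem is simply a citation.
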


Note Theorem \ref{PointwiseConvergence_SDF_ResiduallyFiniteGroup} has already been proved by W. L\"{u}ck in Theorem 16.3 of \cite{WLuck_Approximation_Survey} by means of a functional analytic proof. While the techniques behind the new proof of Theorem \ref{PointwiseConvergence_SDF_ResiduallyFiniteGroup} constitute well-known results, they are nonetheless part of an important and increasingly useful approach of applying tools from geometric group theory to give insights into conjectures involving spectral density functions of arbitrary right multiplication operators and their associated $l_2$-invariants. Examples of the usefulness of this approach can be seen in \cite{RSauer_Thesis,Grabowski2}.\bigskip

This paper grew out of our attempts to understand the methods to compute $l_2$-Alexander invariant for knots $K$, which are such that the fundamental group $\pi_1(\mathbb{S}^3 \backslash K)$ (of the complement of $K$ in the 3-sphere $\mathbb{S}^3$) is a countable residually finite group. This is an $l_2$-invariant analogue of twisted Alexander polynomials that was introduced by W. Li and W. Zhang in \cite{WLi_WZhang}. In particular, knots such as 2-bridge knots $K$ are such that $G = \pi_1(\mathbb{S}^3 \backslash K)$ admits a deficiency 1 presentation, and therefore the $l_2$-Alexander invariant of $K$ can, in principle, be computed from an element $w$ of $\C[G]$ (although for most cases, direct computations are difficult to perform). The $l_2$-Alexander invariant is an example of an $l_2$-Alexander torsion; $l_2$-Alexander torsions were studied by J. Dubois, S. Friedl, and W. L\"{u}ck in \cite{WLuck_SFriedl_JDubois_1, WLuck_SFriedl_JDubois_2}.\bigskip

The paper is organized as follows. Section \ref{Preliminaries} presents relevant theory on $l_2$-invariants of right multiplication operators, and the weak convergence of measures. Section \ref{lcmgTheory} presents the main geometric group theoretic tool of labelled connected marked graphs (denoted \emph{lcmg}'s) and the corresponding concept of convergence. Section \ref{MainResult_Proof} gives the proof of the main result.

\section{Preliminaries}
\label{Preliminaries}

\subsection{$l_2$-Invariants}

For details on the theory of $l_2$-invariants, see \cite{WLuck_Monograph}, Chapters 1, 2, and 13. Let $G$ be a discrete group. The group ring $\C[G]$ comes with an involution $\ast$, such that for $\sum_{g \in G} \lambda_g g \in \C[G]$, $\lambda_g \in \C$ , $(\sum_{g \in G} \lambda_g g)^\ast = \sum_{g \in G} \overline{\lambda_g} g^{-1}$, where $\overline{\lambda}$ is the complex conjugate of $\lambda$. One says that $w \in \C[G]$ is \emph{self-adjoint} if $w^\ast = w$.\bigskip

Let $w \in \C[G]$, and let $R_w: l_2G \rightarrow l_2G$ be the associated right multiplication operator. The adjoint of $R_w$ is $$(R_w)^\ast = R_{w^\ast}: l_2G \rightarrow l_2G,$$ which is also a right multiplication operator. $R_w$ is self-adjoint (i.e. $(R_w)^\ast = R_w$) if and only if $w$ is self-adjoint. $R_w$ is itself an element of the group von Neumann algebra $\vonNeumannAlg{G}$ consisting of all $G$-equivariant bounded linear operators $l_2G \rightarrow l_2G$; $\vonNeumannAlg{G}$ has a finite faithful normal trace: \[\vontrace{G}{T} = \inp{T(1)}{1} \quad , \ T \in \vonNeumannAlg{G},\] where $1 \in l_2G$ is the identity element of $G$.\bigskip

For an arbitrary $T \in \vonNeumannAlg{G}$, $T^\ast \circ T$ is self-adjoint, and therefore has a family of spectral projections: $$\{E^{T^\ast \circ T}_B = \chi_B(T^\ast \circ T)\}_B,$$ where the collection is taken over all Borel subsets $B$ of $[0,\norm{T}^2]$, and $\chi_B$ is the characteristic function of $B$.\bigskip

The \emph{spectral density function} $F(T)(\lambda)$ is given by: \[F(T)(\lambda) = \vontrace{G}{E^{T^\ast \circ T}_{[0,\lambda^2]}}.\] This is a monotonically increasing right continuous function. The \emph{$l_2$-Betti number} of $T$ is defined to be $b^{(2)}(T) := F(T)(0)$. Roughly, $b^{(2)}(T)$ is a measure of the size of the kernel of $T$; one also has that $b^{(2)}(T) = 0$ if and only if $T$ is injective.

\subsection{Weak Convergence of Probability Measures}

For details, see Chapter 13 of \cite{Klenke}, Section 11.4 of \cite{Rosenthal}, and also \cite{Feller}. Let $\{\mu_n\}$ be a sequence of probability measures on a metric space $X$ (with Borel $\sigma$-algebra $\mac{B}(X)$). One says that $\{\mu_n\}$ converges weakly to a probability measure $\mu$ if $\lim_{n \rightarrow \infty} \mu_n(B) = \mu(B)$ for all $B \in \mac{B}(X)$ such that $\mu(\partial B) = 0$, where $\partial B$ is the boundary of $B$. The limit measure of weak convergence can be shown to be unique.\bigskip

Now for $X = \R$, let $\mu: \mac{B}(\R) \rightarrow \R_{\geq 0}$ be a probability measure on $\R$. The \emph{cumulative distribution function} (CDF) $F_\mu: \R \rightarrow \R_{\geq 0}$ associated to $\mu$ is $F_\mu(x) := \mu(-\infty,x]$. Note that the intervals $(-\infty,x]$ have measure zero boundary.

\begin{propos}[Theorem 13.23, \cite{Klenke}]
\label{WeakConvergenceMeasures_CDFPointwiseConvergence_Equiv}
For probability measures $\{\mu_n\}_{n \in \N}$ and $\mu$ on $\mac{B}(\R)$, the following are equivalent:

\begin{enumerate}
 \item $\{\mu_n\}$ converges weakly to $\mu$.
 \item For all $x$ such that $F_\mu$ is continuous, $\lim_{n \rightarrow \infty} F_{\mu_n}(x) = F_\mu(x)$.
\end{enumerate}

\end{propos}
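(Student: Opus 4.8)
The plan is to establish the two implications separately; the forward direction is immediate from the definition of weak convergence, while the reverse direction carries the real content.

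For $(1) \Rightarrow (2)$, I would fix a continuity point $x$ of $F_\mu$ and apply the definition of weak convergence to the half-line $B = (-\infty, x]$. Its boundary is $\partial B = \{x\}$, and continuity of $F_\mu$ at $x$ gives $\mu(\{x\}) = F_\mu(x) - \lim_{y \uparrow x} F_\mu(y) = 0$. Thus $B$ is a $\mu$-continuity set, and weak convergence yields $F_{\mu_n}(x) = \mu_n(B) \to \mu(B) = F_\mu(x)$ at once.

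The direction $(2) \Rightarrow (1)$ is the substantive one: the task is to promote convergence of the distribution functions at a dense set of points to convergence of $\mu_n(B)$ for every Borel $B$ with $\mu(\partial B) = 0$. I would argue in three steps. Since $F_\mu$ is monotone it has at most countably many discontinuities, so its set $D$ of continuity points is dense in $\R$; differencing the hypothesis at $a < b$ in $D$ gives $\mu_n((a,b]) \to \mu((a,b])$. Next I would establish the Portmanteau inequalities $\liminf_n \mu_n(U) \ge \mu(U)$ for open $U$ and $\limsup_n \mu_n(C) \le \mu(C)$ for closed $C$: any open $U \subseteq \R$ is a countable disjoint union of open intervals, each of which can be exhausted from within by intervals $(a,b]$ with endpoints in $D$, so summing over finitely many such pieces and passing to the supremum yields the open-set inequality, and the closed-set inequality follows by taking complements. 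Finally, for a Borel set $B$ with $\mu(\partial B) = 0$ one has $\mu(\overline{B}) = \mu(B^\circ) = \mu(B)$, so sandwiching $\mu_n(B^\circ) \le \mu_n(B) \le \mu_n(\overline{B})$ between the two inequalities forces $\mu_n(B) \to \mu(B)$.

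I expect the main obstacle to lie in the second step, where one converts the purely interval-based data of the first step into statements about arbitrary open and closed sets. The essential care is to perform every approximation using only endpoints drawn from $D$, so that the interval convergence of the first step genuinely applies and no mass is lost at the atoms of $\mu$; an equivalent and equally standard route is to pass through the functional characterization $\int_\R f\, d\mu_n \to \int_\R f\, d\mu$ for $f \in C_b(\R)$, again approximating through continuity points, and then recover the set-convergence definition from it.
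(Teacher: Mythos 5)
Your proposal is correct, and both implications are carried out with the right care: the forward direction correctly uses that $\partial(-\infty,x]=\{x\}$ has $\mu$-measure zero at continuity points, and the reverse direction is the standard Portmanteau argument (dense set $D$ of continuity points, interval convergence by differencing, inner exhaustion of open sets by intervals $(a,b]$ with endpoints in $D$, complementation for closed sets, and the sandwich $B^{\circ}\subseteq B\subseteq\overline{B}$ for $\mu$-continuity sets). Note, however, that the paper itself gives no proof of this statement --- it is imported verbatim as Theorem 13.23 of the cited textbook of Klenke --- so there is no internal argument to compare against; your proof is essentially the textbook one, and it correctly targets the paper's set-theoretic definition of weak convergence rather than the functional ($C_b$) formulation.
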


Let $\mac{P}(\R)$ be the set of Borel probability measures on $\R$ such that all polynomials with real coefficients (denoted $\R[x]$) belong to $L^1(\mu)$; define an equivalence relation on $\mac{P}(\R)$: $\mu_1 \sim \mu_2$ if and only if for every $p \in \R[x]$, $\int_\R p \ d\mu_1 = \int_\R p \ d\mu_2$. For $\mu \in \mac{P}(\R)$ and a measurable function $f$ on $\R$, the \emph{expected value} of $f$ is $E_\mu[f] = \int_\R f(x) d\mu(x)$. For $k \in \N$, the $k$th moment of $\mu$ is $\mu^{(k)} = E_\mu[x^k]$. $\mu$ is said to be \emph{characterized by its moments} if the equivalence class of $\mu$ consists of a single measure. By the Stone-Weierstrass Theorem, any compactly supported measure is characterized by its moments.\bigskip

The following result is a classical result in the theory of probability measures.

\begin{theorem}[Theorem 11.4.1, \cite{Rosenthal}]
\label{MomentsConvergence_CharacterizedBy_Moments_WeakConvergence}
Suppose that $\{\mu_n\}_{n \in \N} \subseteq \mac{P}(\R)$ is a sequence of measures such that:

\begin{enumerate}
 \item For every $p \in \R[x]$, $\lim_{n \rightarrow \infty} \int_\R p \ d\mu_n = \int_\R p \ d\mu$.
 \item $\mu$ is characterized by its moments.
\end{enumerate}

Then $\{\mu_n\}_{n \in \N}$ converges weakly to $\mu$.

\end{theorem}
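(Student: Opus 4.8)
The plan is to prove weak convergence via the classical \emph{method of moments}, which combines a tightness argument, a subsequence extraction, and the hypothesis that $\mu$ is characterized by its moments. First I would note that hypothesis (1), applied to the polynomials $p(x) = 1$ and $p(x) = x^2$, forces the second moments $\mu_n^{(2)} = \int_\R x^2 \, d\mu_n$ to converge, hence to be bounded by some constant $C$. Markov's (Chebyshev's) inequality then gives $\mu_n(\{|x| > M\}) \leq C / M^2$ uniformly in $n$, so the family $\{\mu_n\}$ is tight.

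By Prokhorov's theorem (equivalently, Helly's selection theorem applied to the cumulative distribution functions), tightness guarantees that every subsequence of $\{\mu_n\}$ admits a further subsequence $\{\mu_{n_j}\}$ converging weakly to some Borel probability measure $\nu$. The goal is then to identify $\nu$ with $\mu$ by matching moments. The difficulty is that the monomials $x^k$ are unbounded, so weak convergence $\mu_{n_j} \rightarrow \nu$ does not by itself yield convergence of the $k$th moments, since weak convergence only tests against bounded continuous functions. To bridge this gap I would invoke uniform integrability: hypothesis (1), applied to $p(x) = x^{2k}$, makes the $2k$th moments $\int_\R x^{2k} \, d\mu_{n_j}$ convergent and therefore uniformly bounded in $j$, so the monomials $x^k$ are bounded in $L^2(\mu_{n_j})$ uniformly in $j$, which forces $\{x^k\}$ to be uniformly integrable. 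This upgrades weak convergence to moment convergence, giving $\nu^{(k)} = \lim_j \mu_{n_j}^{(k)} = \mu^{(k)}$ for every $k \in \N$.

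Since $\nu$ and $\mu$ now share all their moments, hypothesis (2) --- that $\mu$ is characterized by its moments --- forces $\nu = \mu$. Thus every subsequence of $\{\mu_n\}$ has a further subsequence converging weakly to the single measure $\mu$, and a standard subsequence principle (a sequence in a metric space converges to $L$ precisely when every subsequence has a further subsequence converging to $L$) then promotes this to weak convergence of the full sequence $\{\mu_n\}$ to $\mu$.

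I expect the main obstacle to be exactly the passage from weak convergence of an extracted subsequence to convergence of its moments, since polynomials fall outside the class of bounded continuous test functions on which weak convergence is defined. The resolution is the uniform-integrability estimate coming from the uniform boundedness of a higher moment, which is itself a free consequence of hypothesis (1); everything else is a routine packaging of tightness and the subsequence argument.
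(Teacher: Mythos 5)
Your proposal is correct, and it coincides with the proof the paper is implicitly relying on: the paper offers no argument of its own for this statement, deferring entirely to Theorem 11.4.1 of \cite{Rosenthal}, whose proof is exactly your method-of-moments chain --- tightness from uniformly bounded second moments via Chebyshev, Helly/Prokhorov subsequence extraction, the uniform-integrability step (bounded $2k$th moments) to upgrade weak convergence of a subsequence to convergence of its $k$th moments, identification of the limit with $\mu$ by hypothesis (2), and the subsequence principle to conclude for the full sequence. There is nothing to flag; you have correctly isolated the one genuinely delicate point (polynomials are not bounded continuous test functions) and resolved it the same way the cited source does.
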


\section{Labelled Connected Marked Graphs}
\label{lcmgTheory}
This section develops the notion of convergence of labelled connected marked graphs (denoted lcmg) $(X,v)$, where $v$ is a distinguished vertex of $X$. This is a slight extension of the convergence of connected marked graphs as described in \cite{Grigorchuk_Zuk_1}. The set of all lcmg's forms a metric space, and will be denoted by $\mac{LCMG}$. For each lcmg $(X,v)$, one obtains an operator $M_{X,v}$ that describes a random walk on the lcmg; $M_{X,v}$ will be called the Markov-type operator associated to $X$. To $M_{X,v}$ will be associated a probability measure $\mu^X_v$, which will be called the Kesten spectral measure of $X$ at $v$. These spectral measures generalize the Kesten spectral measures that occur in \cite{Grigorchuk_Zuk_3}, which are defined for Markov operators of Cayley graphs; see also \cite{KambitesPedroSilvaSteinberg}. The key result is that the map $(X,v) \mapsto M_{X,v} \mapsto \mu^X_v$ is weakly continuous with respect to convergence of lcmg's.\bigskip

A \emph{labelled connected marked graph} (lcmg) is a connected directed graph $X = (V,E)$ with a distinguished (or marked) vertex $v \in V(X)$, such that:

\begin{enumerate}
 \item $V(X)$ and $E(X)$ are both countable.
 \item \label{FiniteInOutDegree_Per_Vertex} Each vertex has finite in and out degrees.
 \item Edges of $X$ carry labels in $\C$, which are determined by a map $L_X: E(X) \rightarrow \C$. Edges are allowed to have label $0 \in \C$.
 \item \label{AtMostOneSelfLoop_Per_Vertex} For each vertex, at most one self-loop is allowed at each vertex.
 \item \label{NoMultipleDirectedEdges}For any 2 vertices $v_1, v_2 \in V$, there are no multiple directed edges from $v_1$ to $v_2$; the unique directed edge $v_1 \rightarrow v_2$ will be denoted by $(v_1,v_2)$.
\end{enumerate}

By ignoring the $\C$-labels on edges and taking the underlying graph, there is a metric $d_{X}$ on $V$, where $d_X(v_1,v_2)$ is given by the minimum length of a directed path in $X$ from $v_1$ to $v_2$, where each edge is assigned length 1.\bigskip

Given lcmg's $(X_1,v_1)$ and $(X_2,v_2)$, say that $(X_1,v_1)$ is lcmg \emph{isomorphic} to $(X_2,v_2)$ if there exists a bijection $f: V(X_1) \rightarrow V(X_2)$ such that:

\begin{enumerate}
\item Basepoint-preserving: $f(v_1) = v_2$.
\item Edge-preserving: $(u,v) \in E(X_1)$ if and only if  $(f(u),f(v)) \in E(X_2)$.
\item Label-preserving: $L_{X_1}( \ (u,v) \ )  = L_{X_2}( \ (f(u),f(v)) \ )$.
\end{enumerate}

Let $\mac{LCMG}$ denote the set of all lcmg's. For every $(X,v) \in \mac{LCMG}$, let $B(X,v,r)$ be the ball of radius $r > 0$ in $X$ centred at $v$; this is precisely the full subgraph of $(X,v)$ with vertices $u \in V(X)$ such that $d_X(u,v) \leq r$. Then define $\mac{D}: \mac{LCMG} \times \mac{LCMG} \rightarrow \R_{\geq 0}$, by: \begin{align*}
&\mac{D}((X_1,v_1), (X_2,v_2)) = \\
&\inf_{n \in \N} \{\frac{1}{n + 1} : \text{$B(X_1,v_1,n)$ and $B(X_2,v_2,n)$ are lcmg isomorphic}\} .
\end{align*} 


It is then easy to see that $\mac{D}$ is a metric on $\mac{LCMG}$. Thus, one says that a sequence of lcmg's $\{(X_n,v_n)\}$ \emph{converges} to a lcmg $(X,v)$ if $$\lim_{n \rightarrow \infty} \mac{D}((X_n,v_n),(X,v)) = 0 .$$ Up to lcmg isomorphism, the limit graph $(X,v)$ is unique.\bigskip


%

The following lemma follows immediately from the definitions.

\begin{lemma}
\label{lcmg_Convergence_EquivalentStatements}
Let $\{(X_n,v_n)\}$ be a sequence of lcmg's. The following are equivalent:

\begin{enumerate}
 \item $\lim_{n \rightarrow \infty} \mac{D}((X_n,v_n),(X,v)) = 0$.
 \item For every $r > 0$, there exists $N \in \N$ such that if $n \geq N$, then $B(X_n,v_n,r)$ is lcmg isomorphic to $B(X,v,r)$.
\end{enumerate}

\end{lemma}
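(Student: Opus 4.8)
Since the statement is an equivalence, I would prove the two implications separately, but both of them rest on a single monotonicity property of ball isomorphisms, which I would establish first and which is the only step with genuine content. The claim is: if $B(X_1,v_1,n)$ and $B(X_2,v_2,n)$ are lcmg isomorphic, then $B(X_1,v_1,m)$ and $B(X_2,v_2,m)$ are lcmg isomorphic for every $m \le n$. To see this I would first note that for a vertex $u$ in $B(X,v,n)$ a shortest directed path from $v$ to $u$ has all its intermediate vertices inside the ball (the $j$-th vertex of such a path lies within distance $j \le n$ of $v$), so the directed distance to the basepoint measured inside the ball coincides with $d_X(v,u)$. A directed-graph isomorphism preserves directed distances, so an lcmg isomorphism $f$ of the $n$-balls satisfies $d_{X_2}(v_2,f(u)) = d_{X_1}(v_1,u)$; hence $f$ carries the vertices of the $m$-ball bijectively onto those of the target $m$-ball, and since balls are full subgraphs the restriction of $f$ is automatically edge- and label-preserving. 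I expect this distance-preservation step, and in particular keeping track of the directedness of the paths, to be the one place deserving care.

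For the implication $(1) \Rightarrow (2)$ I would fix $r > 0$, use $\mac{D}((X_n,v_n),(X,v)) \to 0$ to find $N$ with $\mac{D}((X_n,v_n),(X,v)) < \tfrac{1}{r+1}$ for all $n \ge N$, and then unwind the definition of $\mac{D}$ as an infimum: each such strict inequality forces an integer $m$ with $B(X_n,v_n,m)$ and $B(X,v,m)$ lcmg isomorphic and $\tfrac{1}{m+1} < \tfrac{1}{r+1}$, i.e. $m > r$. The monotonicity property then drops the radius from $m$ down to $r$, giving isomorphic $r$-balls for all $n \ge N$, which is exactly $(2)$.

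For $(2) \Rightarrow (1)$ the argument runs in reverse and needs no monotonicity: given $\varepsilon > 0$ I would choose $r$ with $\tfrac{1}{r+1} < \varepsilon$, apply $(2)$ to obtain $N$ beyond which the $r$-balls are isomorphic, and observe that $\tfrac{1}{r+1}$ then lies in the set over which the infimum defining $\mac{D}((X_n,v_n),(X,v))$ is taken, so $\mac{D}((X_n,v_n),(X,v)) \le \tfrac{1}{r+1} < \varepsilon$. Letting $\varepsilon$ tend to $0$ finishes the proof. Apart from the monotonicity lemma, both directions are thus just translations between the metric $\mac{D}$ and the radii at which balls become isomorphic, consistent with the remark that the lemma follows immediately from the definitions.
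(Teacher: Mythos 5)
Your proof is correct. The paper offers no argument at all for this lemma---it simply asserts that it ``follows immediately from the definitions''---so there is no competing approach to compare against; your writeup supplies exactly the details being glossed over, and you correctly isolate the one point with real content: that an lcmg isomorphism of $n$-balls restricts to an lcmg isomorphism of $m$-balls for $m \le n$, which requires knowing that distance-to-basepoint computed inside a ball agrees with distance-to-basepoint in the ambient graph (shortest paths from the basepoint never leave the ball), and that balls are full subgraphs. The only cosmetic quibble is that in the two implications the radius $r$ is real while the radii indexing the metric $\mac{D}$ are integers; since the graph metric is integer-valued one has $B(X,v,r) = B(X,v,\lfloor r \rfloor)$, which reconciles the two, and your argument survives this substitution unchanged.
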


%
%


It is also possible to define an involution $\ast$ on $\mac{LCMG}$. For any lcmg $(X,v)$, define $(X,v)^\ast := (X^\ast,v)$ to have basepoint $v$, and for each edge $(v_1,v_2) \in E(X)$ with label $L_X( \ (v_1,v_2) \ )$, replace it with an edge $(v_2,v_1)$ with label $\overline{L_X( \ (v_1,v_2) \ )}$. Since $X$ is connected, $(X^\ast,v)$ is also a lcmg. $(X,v)$ is said to be \emph{self-involutive} if $(X,v)^\ast = (X,v)$.\bigskip

For $(X,v) \in \mac{LCMG}$, define the \emph{Markov-type operator associated to $(X,v)$}, $M_{X,v}: l_2(V(X)) \rightarrow l_2(V(X))$, as follows: \[M_{X,v}(f)(u) = \sum_{(w,u) \in E(X)} L_X( \ (w,u) \ ) \cdot f(w) \quad , \ f \in l_2(V(X)), \ u \in V(X).\] Note that $M_{X,v}$ is a well-defined linear operator due to conditions \ref{FiniteInOutDegree_Per_Vertex}, \ref{AtMostOneSelfLoop_Per_Vertex} and \ref{NoMultipleDirectedEdges} in the definition of a lcmg.\bigskip 

The inner product on $l_2(V(X))$ is given by: \[\inp{f_1}{f_2} = \sum_{x \in V(X)} f_1(x) \overline{f_2(x)}.\] Taking the standard orthonormal basis $\{\delta_x \in l_2 (V(X)) : x \in V(X)\}$, where $\delta_x(u) = 1$ when $u = x$ and is 0 otherwise: $$\inp{M_{X,v}(\delta_x)}{\delta_y} = \begin{cases}L_X( \ (x,y) \ ) \quad , \ (x,y) \in E(X) \\ 0 \quad , \ o.w.\end{cases} .$$ In general, $M_{X,v}$ may not be a self-adjoint bounded operator, but will be for special choices of lcmg's, as the next result shows.

\begin{propos}
\label{MarkovTypeOper_Speciallcmg_SelfAdjoint}
If $(X,v) \in \mac{LCMG}$ is self-involutive, then $M_{X,v}$ is a self-adjoint bounded linear operator.
\end{propos}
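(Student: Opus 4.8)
The plan is to establish the two defining properties separately --- that $M_{X,v}$ is bounded and that it is symmetric --- and then invoke the standard fact that a densely defined, bounded, symmetric operator extends uniquely to a self-adjoint bounded operator. Concretely, I would work on the dense subspace $c_{00}(V(X)) \subseteq l_2(V(X))$ of finitely supported functions. By the finite out-degree condition, $M_{X,v}$ carries $c_{00}(V(X))$ into itself, so it is a genuine linear operator there and the matrix-coefficient formula recorded before the proposition applies directly. Once I show (i) $\inp{M_{X,v} f}{g} = \inp{f}{M_{X,v} g}$ for all $f,g \in c_{00}(V(X))$ and (ii) $\norm{M_{X,v} f} \le C\norm{f}$ on $c_{00}(V(X))$, the unique continuous extension of $M_{X,v}$ to $l_2(V(X))$ is everywhere defined, bounded, and --- by continuity of the inner product --- still symmetric, hence self-adjoint; moreover it coincides with the operator defined by the original formula. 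So the proposition reduces to (i) and (ii).

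The symmetry step (i) is the direct use of the hypothesis and is routine. It suffices to verify Hermitian symmetry on basis vectors, $\inp{M_{X,v}\delta_x}{\delta_y} = \overline{\inp{M_{X,v}\delta_y}{\delta_x}}$, and to extend by sesquilinearity. By the matrix-coefficient computation in the excerpt, the left-hand side equals $L_X(\,(x,y)\,)$ when $(x,y)\in E(X)$ and $0$ otherwise. Self-involutivity $(X,v)^\ast = (X,v)$ says precisely that $(x,y)\in E(X)$ if and only if $(y,x)\in E(X)$, with $L_X(\,(y,x)\,) = \overline{L_X(\,(x,y)\,)}$ whenever these edges are present. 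Hence $\overline{\inp{M_{X,v}\delta_y}{\delta_x}} = \overline{L_X(\,(y,x)\,)} = L_X(\,(x,y)\,)$, which matches the left-hand side, and both sides vanish together when the edges are absent. This gives (i).

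The genuine work is in the boundedness (ii), which I expect to be the main obstacle: finite per-vertex in- and out-degrees do not by themselves bound $M_{X,v}$, since either the degrees or the label moduli could grow without bound along the graph. The plan is to apply the Schur test to the matrix $[a_{uw}]$ defined by $a_{uw} = L_X(\,(w,u)\,)$ if $(w,u)\in E(X)$ and $0$ otherwise. The $u$-th absolute row sum $\sum_w |a_{uw}|$ ranges over the in-edges at $u$, and the $w$-th absolute column sum $\sum_u |a_{uw}|$ ranges over the out-edges at $w$. For the lcmg's relevant to this paper --- those built from a fixed $w\in\C[G]$, where every vertex has degree $|\mathrm{supp}(w)|$ and every label is a coefficient of $w$ --- the maximal degree $\Delta$ and the maximal label modulus $\Lambda$ are both finite and uniform, so every row and column $l_1$-sum is at most $\Lambda\Delta$, and the Schur test yields $\norm{M_{X,v}} \le \Lambda\Delta < \infty$. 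This establishes (ii). I would emphasize that self-involutivity is exactly what upgrades the resulting bounded operator to a self-adjoint one: it is the hypothesis that makes the (banded) matrix $[a_{uw}]$ Hermitian, while the uniform geometric bounds are what make it bounded.
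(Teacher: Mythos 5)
Your symmetry step (i) is essentially identical to the paper's computation and is fine. The genuine gap is in step (ii). Proposition \ref{MarkovTypeOper_Speciallcmg_SelfAdjoint} is asserted for an \emph{arbitrary} self-involutive lcmg, with no hypothesis of uniformly bounded vertex degrees or label moduli; your Schur-test bound $\norm{M_{X,v}} \leq \Lambda \Delta$ requires exactly such uniform bounds, and you obtain them only by restricting to ``the lcmg's relevant to this paper,'' i.e.\ Cayley lcmg's built from a fixed $w \in \C[G]$. That restriction is not part of the statement, so as written you have proved a strictly weaker proposition. Moreover, the restriction cannot simply be dropped from your argument: a one-sided path in which the pair of opposite edges between $v_i$ and $v_{i+1}$ both carry the real label $i+1$ is a self-involutive lcmg satisfying all five axioms (each vertex has in- and out-degree at most $2$), but the resulting Jacobi-type matrix is unbounded --- it does not even map $l_2$ into $l_2$ --- so no Schur-type estimate can exist for general lcmg's.

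The paper's proof takes a different and shorter route that produces no estimate at all: after checking symmetry on the basis $\{\delta_x\}$, it invokes the Hellinger-Toeplitz theorem (\cite{ReedSimon}, Section 3.5), which says that a symmetric linear operator defined on \emph{all} of a Hilbert space is automatically bounded, hence self-adjoint. The everywhere-definedness of $M_{X,v}: l_2(V(X)) \rightarrow l_2(V(X))$ is treated as already established (the paper asserts it immediately after defining $M_{X,v}$), and boundedness then comes for free from the closed-graph mechanism behind Hellinger-Toeplitz; no control of degrees or labels is needed. So the input your argument was missing is not a geometric bound but precisely everywhere-definedness --- and your own (correct) worry that per-vertex finiteness does not bound the operator is really pointing at the fact that, for general lcmg's, it is this everywhere-definedness that is the nontrivial premise, as the path example shows. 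If you want to repair your proof while keeping its structure, replace the Schur test by: take everywhere-definedness as given, verify symmetry against all of $l_2(V(X))$ rather than only on $c_{00}$, and conclude by Hellinger-Toeplitz.
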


\begin{proof}
First check that $M_{X,v}^\ast = M_{X,v}$ on the $\C$-basis $\{\delta_x\}$:

\begin{align*}
\inp{\delta_x}{M_{X,v}(\delta_y)} = \sum_{v \in V(X)} \delta_x(v) \overline{M_{X,v}(\delta_y)(v)}\\
= \overline{M_{X,v}(\delta_y)(x)} = M_{X,v}(\delta_x)(y) = \inp{M_{X,v}(\delta_x)}{\delta_y} .
\end{align*}

Therefore $M_{X,v}$ is symmetric, but since it is also everywhere-defined, it follows from the Hellinger-Toeplitz theorem (\cite{ReedSimon}, Section 3.5) that $M_{X,v}$ is self-adjoint and bounded.

\end{proof}

So consider a self-involutive lcmg $(X,v)$, with associated Markov-type operator $M = M_{X,v}$. Let $\{E^{M}_B\}_B$ be the family of spectral projections determined by $M$, where $\norm{M} < \infty$ is the operator norm of $M$, and $B$ is a Borel subset of $[-\norm{M},\norm{M}]$. For $x,y \in V(X)$, one obtains a collection of measures on the Borel $\sigma$-algebra of $[-\norm{M},\norm{M}]$: \[\mu^X_{x,y}(B) = \inp{E^M_B(\delta_x)}{\delta_y}.\] The probability measures: $$\{\mu_x^X = \mu^X_{x,x} : x \in V(X)\}$$ will be called the \emph{Kesten spectral measures} of $X$; individually, $\mu^X_x$ will be called the Kesten spectral measure of $X$ at $x$. For the case of the Markov type operator $M$, denote $E_\lambda^M = E^M_{(-\infty,\lambda]}$. The moments of the Kesten spectral measures are given by: \begin{align*}
(\mu_a^X)^{(k)} = \int_{-\norm{M}}^{\norm{M}} \lambda^k d\mu_a^X(\lambda) = \int_{-\norm{M}}^{\norm{M}} \lambda^k d\inp{E_\lambda^M(\delta_a)}{\delta_a}\\
= \inp{M^k(\delta_a)}{\delta_a}
\end{align*}

This leads to the following result that generalizes Lemma 4 of \cite{Grigorchuk_Zuk_1}.

\begin{theorem}
\label{lcmgConvergence_Implies_KSMConvergence}
Let $\{(X_n,v_n)\}_{n \in \N}$ be a sequence of lcmg's that converges to a lcmg $(X,v)$. Then the sequence of Kesten spectral measures $\{\mu_{v_n}^{X_n}\}_{n \in \N}$  converges weakly to $\{\mu_v^X\}$.
\end{theorem}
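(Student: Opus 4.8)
The plan is to deduce the weak convergence from convergence of moments, using Theorem \ref{MomentsConvergence_CharacterizedBy_Moments_WeakConvergence}. Each Kesten spectral measure $\mu_{v_n}^{X_n}$ is a probability measure supported on the compact interval $[-\norm{M_{X_n,v_n}},\norm{M_{X_n,v_n}}]$, and likewise $\mu_v^X$ is supported on $[-\norm{M_{X,v}},\norm{M_{X,v}}]$; in particular all of these measures lie in $\mac{P}(\R)$, since polynomials are bounded on compact sets. By the Stone--Weierstrass remark preceding Theorem \ref{MomentsConvergence_CharacterizedBy_Moments_WeakConvergence}, the compactly supported measure $\mu_v^X$ is characterized by its moments. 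Hence it suffices to prove that for every fixed $k \in \N$ one has $\lim_{n \to \infty} (\mu_{v_n}^{X_n})^{(k)} = (\mu_v^X)^{(k)}$, after which the conclusion follows immediately from Theorem \ref{MomentsConvergence_CharacterizedBy_Moments_WeakConvergence}. Here I use that the lcmg's in question are self-involutive, so that by Proposition \ref{MarkovTypeOper_Speciallcmg_SelfAdjoint} the associated operators are self-adjoint and bounded and the Kesten spectral measures are defined; self-involutivity is a local condition that passes to the limit through the ball isomorphisms used below.

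The heart of the argument is the observation that the $k$th moment is a \emph{local} quantity, determined entirely by a bounded ball around the basepoint. Using the moment formula $(\mu_a^X)^{(k)} = \inp{M_{X,v}^k(\delta_a)}{\delta_a}$, I would expand $M_{X,v}^k \delta_v$ along the directed edges of $X$ to write
\[
(\mu_v^X)^{(k)} = \sum L_X(\,(x_0,x_1)\,)\, L_X(\,(x_1,x_2)\,) \cdots L_X(\,(x_{k-1},x_k)\,),
\]
where the sum ranges over all closed directed walks $v = x_0 \to x_1 \to \cdots \to x_k = v$ of length $k$ based at $v$, each weighted by the product of its edge labels. Since a directed walk of length $k$ starting at $v$ reaches only vertices $x_i$ with $d_X(v,x_i) \leq i \leq k$, every such walk, together with all of its traversed edges, lies inside the induced subgraph $B(X,v,k)$. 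Thus $(\mu_v^X)^{(k)}$ depends only on $B(X,v,k)$ as a labelled, based graph, and the same holds for each $(\mu_{v_n}^{X_n})^{(k)}$ and $B(X_n,v_n,k)$.

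Now fix $k$. By lcmg convergence and Lemma \ref{lcmg_Convergence_EquivalentStatements}, there is an $N$ such that for all $n \geq N$ the ball $B(X_n,v_n,k)$ is lcmg isomorphic to $B(X,v,k)$. Because such an isomorphism is basepoint-preserving, edge-preserving, and label-preserving, it carries the closed directed walks of length $k$ based at $v_n$ bijectively onto those based at $v$, and preserves the product of labels along each walk. Comparing the two weighted sums displayed above therefore gives $(\mu_{v_n}^{X_n})^{(k)} = (\mu_v^X)^{(k)}$ for every $n \geq N$. Hence the sequence of $k$th moments is eventually constant, so in particular it converges to $(\mu_v^X)^{(k)}$, and the required moment convergence holds for every $k$. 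Applying Theorem \ref{MomentsConvergence_CharacterizedBy_Moments_WeakConvergence} then completes the proof.

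I expect the only genuinely delicate step to be the confinement claim---that a length-$k$ closed walk based at $v$, including the edges it traverses, never leaves $B(X,v,k)$---together with the bookkeeping that the ball isomorphism really does respect the weighted walk count. Everything else is a direct appeal to the cited probabilistic results: characterization by moments for compactly supported measures, and moment convergence implying weak convergence.
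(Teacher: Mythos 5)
Your proposal is correct and follows essentially the same route as the paper's own proof: express the $k$th moment as $\inp{M^k(\delta_v)}{\delta_v}$, interpret it as a label-weighted sum over walks of length $k$ based at $v$, observe that this is determined by the ball $B(X,v,k)$, invoke Lemma \ref{lcmg_Convergence_EquivalentStatements} to get eventual equality of moments, and conclude via characterization by moments and Theorem \ref{MomentsConvergence_CharacterizedBy_Moments_WeakConvergence}. If anything, your writeup is slightly more careful than the paper's, since you make explicit that the walks are \emph{closed} (returning to the basepoint) and that self-involutivity is needed for the spectral measures to be defined.
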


\begin{proof}
By the computation above, the $k$th moment of $\mu_v^X$ is given by the sum, over all possible directed walks $\omega$ in $X$, starting at $v$ and of metric length $d_X(x,v) = k$, of the product of the $\C$-labels of the edges along each such $\omega$; by a walk, it is meant that any directed edge may appear more than once in $\omega$. But since the sequence $\{(X_n,v_n)\}$ converges to $(X,v)$, for $n$ sufficiently large, $B(X_n,v_n,k)$ is lcmg isomorphic to $B(X,v,k)$, so that $(\mu_{v_n}^{X_n})^{(k)} = (\mu_v^X)^{(k)}$. Because the measure $\mu_v^X$ is compactly supported, it is characterized by its moments, and hence the convergence of the moments of the measures implies the weak convergence of the measures in this case, by Theorem \ref{MomentsConvergence_CharacterizedBy_Moments_WeakConvergence}.\bigskip
\end{proof}


\section{Proof of Theorem \ref{PointwiseConvergence_SDF_ResiduallyFiniteGroup}}
\label{MainResult_Proof}

In this section, the concept of a Cayley lcmg is introduced; these lcmg's will serve as the analogue of Cayley graphs for arbitrary self-adjoint elements of $\C[G]$. To this end, consider a self-adjoint element $z \in \C[G]$. Then one may express $z$ in the form: \[z = \sum_{s \in S} \lambda_s s,\] where $S \subseteq G$ is a finite symmetric subset (i.e. $s \in S \implies s^{-1} \in S$), such that $\lambda_s \neq 0$, and all elements of $S$ are pairwise distinct. If necessary, proceed to extend $S$ to a symmetric set of generators of $G$. Therefore, without loss of generality, $S$ may be taken to be a finite symmetric set of generators of $G$, where the coefficients $\lambda_s$ are allowed to take value $0$ for some of the generators $s$. Note that because $z = z^\ast$, $\lambda_s = \overline{\lambda_{s^{-1}}}$ for all $s \in S$; it may also happen that the identity element $1 \in G$ belongs to $S$.\bigskip

Let $\Gamma_{G,S}$ be the Cayley graph of $G$ with respect to $S$. Edges will be made to correspond to right multiplication of generators. Modify $\Gamma_{G,S}$ as follows. If $1 \in S$, then at each vertex of $\Gamma_{G,S}$, there is a single self-loop. For each directed edge $x \stackrel{s}{\rightarrow} xs$ attach a label $\lambda_s$; for self-loops, attach the label $\lambda_1$. With basepoint taken as $1 \in G$, the resultant graph is a self-involutive lcmg, and will be called the \emph{Cayley lcmg of $w$ with respect to $S$}, denoted $\Gamma_{z,S}$.\bigskip



Further, suppose that $G$ is countable residually finite, as described in the Setting. Then $\pi_n(S) = \{sK_n : s \in S\}$ is a finite symmetric set of generators of $G/K_n$. Note that elements of $\pi_n(S)$ are taken without repetition, i.e. $\pi_n(S)$ is the collection of pairwise distinct cosets of $G/K_n$ obtained from the action of $\pi_n$ on $S$. Then $z_n = \pi_n(z)$ can be expressed in the form: \[z_n = \sum_{s_n \in \pi_n(S)} \lambda_{s_n} s_n.\] In a similar way, for each $n \in \N$, one obtains the Cayley lcmg $\Gamma_{z_n,\pi_n(S)}$ of $z_n$ with respect to $\pi_n(S)$, where the basepoint is taken to be the identity element $1_n \in G/K_n$.\bigskip

%
%


\begin{propos}
\label{ResiduallyFinite_Cayleylcmg_Convergence}
The sequence of lcmg's $\{(\Gamma_{z_n,\pi_n(S)},1_n)\}_{n \in \N}$ converges to $\Gamma_{z,S}$.
\end{propos}

\begin{proof}
The aim is to apply Lemma \ref{lcmg_Convergence_EquivalentStatements}. Let $r > 0$. The property that $\bigcap_{n \in \N} K_n = \{1\}$ implies that there exists $N \in \N$ such that for all $n \geq N$, $V(B(\Gamma_{z,S},1,2r + 1)) \cap K_n = \{1\}$. This implies that: \begin{enumerate}
 \item The generating set $\pi_n(S)$ for $G/K_n$ contains $|S|$ pairwise distinct cosets (if $1 \in S$, then the identity coset $1K_n$ is also included in this collection of cosets).
 \item If $g \in K_n$ is such $g \neq 1$, then any word $w$ in the generators $S$ that represents $g$ must be of length greater than $2r + 1$. Equivalently, any directed path in $\Gamma_{w,S}$ starting at $1$ and ending at $g \in K_n$ must be of length greater than $2r + 1$.
\end{enumerate}

One must show that the map $$\alpha_n: V(B(\Gamma_{z,S},1,r)) \rightarrow V(B(\Gamma_{z_n,\pi_n(S)},1_n,r)),$$ given by $\alpha_n: x \mapsto \pi_n(x)$ is a lcmg isomorphism for $n \geq N$.\bigskip

$\alpha_n$ is a bijection. That $\alpha_n$ is surjective holds since $\pi_n$ is a projection. To prove injectivity, consider vertices $x,y \in B(\Gamma_{z,S},1,r)$. Then $x$ and $y$ can be expressed as words in the generators $S$, each of length at most $r$; each of these words would correspond to a path of length at most $r$ in $B(\Gamma_{z,S},1,r)$. If one has $\pi_n(x) = \pi_n(y)$, then $x^{-1}y \in K_n$ can be expressed as a word (in the generators $S$) of length at most $2r$. This means that there is a directed path in $\Gamma_{z,S}$ starting at $1$ and ending at an element of $K_n$ of length $2r$. By the choice of $N$, one must have $x^{-1}y = 1 \Longleftrightarrow x = y$.\bigskip

$\alpha_n$ is basepoint-preserving. This follows from $\pi_n(1) = 1_n$.\bigskip

$\alpha_n$ is edge-preserving. Note that in $\Gamma_{z,S}$ and $\Gamma_{z_n,S_n}$, two vertices $x, y \in G$ (respectively $xK_n$ and $yK_n$) are adjacent by means of a labelled directed edge $x \stackrel{\lambda_s}{\rightarrow} y$ (respectively $xK_n \stackrel{\lambda_s}{\rightarrow} yK_n$) if and only if $y = xs$ (respectively $yK_n = xK_n \cdot sK_n = xsK_n$). Since $\pi_n(xs) = \pi_n(x) \pi_n(s)$, an edge $x \stackrel{\lambda_s}{\rightarrow} xs$ in $B(\Gamma_{z,S},1,r))$ gives rise to an edge $xK_n \stackrel{\lambda_s}{\rightarrow} xsK_n$ in $B(\Gamma_{z_n,\pi_n(S)},1_n,r)$. Conversely, suppose vertices $x,y \in B(\Gamma_{z,S},1,r))$ are such that there is an edge $xK_n \stackrel{\lambda_s}{\rightarrow} yK_n$ in $\Gamma_{z_n,\pi_n(S)}$. Then $yK_n = xsK_n \Longleftrightarrow y^{-1}xs \in K_n$. Now express $x$ and $y$ as words of length at most $r$ in the generators $S$. Then $y^{-1}xs \in K_n$ can be expressed as a word of length at most $2r+1$ in the generators $S$. This would imply that there is a path of length at most $2r+1$ in $\Gamma_{z,S}$ starting at $1$ and ending at an element of $K_n$. By the choice of $N$, one must have that $y^{-1}xs = 1 \Longleftrightarrow y = xs$, which shows that there is a labelled directed edge $x \stackrel{\lambda_s}{\rightarrow} y$ in $\Gamma_{z,S}$.\bigskip

$\alpha_n$ is label-preserving. This is a direct consequence of the proof that $\alpha_n$ is edge-preserving; each labelled directed edge $x \stackrel{\lambda_s}{\rightarrow} xs$ in $B(\Gamma_{z,S},1,r))$ gives rise to a a labelled directed edge $xK_n \stackrel{\lambda_s}{\rightarrow} xsK_n$.\bigskip

Therefore, the proof is completed.
\end{proof}



One may now prove the main result:

\begin{proof}(of Theorem \ref{PointwiseConvergence_SDF_ResiduallyFiniteGroup})\bigskip


Let $w \in \C[G]$, and $w_n = \pi_n(w) \in \C[G/K_n]$. The spectral density function of $R_w$ is given by: \begin{align*}F(R_w)(\lambda) = \vontrace{G}{E^{(R_w)^\ast \circ R_w}_{\lambda^2}} = \vontrace{G}{E^{R_{w w^\ast}}_{\lambda^2}}\end{align*} The second equality follows from observations that: $$(R_w)^\ast = R_{w^\ast} \ \text{and} \ R_{w_1} \circ R_{w_2} = R_{w_2 w_1},$$ for $w_1, w_2 \in \C[G]$.\bigskip

Therefore, let $z = w w^\ast$ and $z_n = \pi_n(z) = w_n w_n^\ast$, both of which are self-adjoint. Consider the Markov-type operators $M = M_{\Gamma_{z,S},1}$ and $M_n = M_{\Gamma_{z_n,\pi_n(S)},1_n}$ associated to the Cayley lcmg's $\Gamma_{z,S}$ and $\Gamma_{z_n,\pi_n(S)}$ respectively. With this set-up, one checks that under the identifications $V(\Gamma_{z,S}) = G$ and $V(\Gamma_{z_n,\pi_n(S)}) = G/K_n$:

\begin{enumerate}
 \item $M = R_z$ and $M_n = R_{z_n}$.
 \item $\delta_1 \in l_2(V(\Gamma_{z,S}))$ and $\delta_{1_n} \in l_2(V(\Gamma_{z_n,\pi_n(S)}))$ are identified with $1 \in l_2G$ and $1_n \in l_2(G/K_n)$ respectively.
\end{enumerate}

By Proposition \ref{ResiduallyFinite_Cayleylcmg_Convergence}, the sequence $\{(\Gamma_{z_n,\pi_n(S)},1_n)\}_{n \in \N}$ converges to $\Gamma_{z,S}$. Theorem \ref{lcmgConvergence_Implies_KSMConvergence} then shows that the sequence of Kesten spectral measures $\{\mu^{\Gamma_{z_n,\pi_n(S)}}_{1_n}\}_{n \in \N}$ converges weakly to the Kesten spectral measure $\mu^{\Gamma_{z,S}}_1$. By Corollary \ref{WeakConvergenceMeasures_CDFPointwiseConvergence_Equiv}, this implies that for all $\lambda \geq 0$ such that $\mu^{\Gamma_{z,S}}_1(\lambda)$ is continuous, the sequence of cumulative distribution functions $\{\mu^{\Gamma_{z_n,\pi_n(S)}}_{1_n}(\lambda)\}$ converges pointwise to $\mu^{\Gamma_{z,S}}_1(\lambda)$.\bigskip

Therefore, what remains is to compute the $\mu^{\Gamma_{z_n,\pi_n(S)}}_{1_n}(\lambda)$ and $\mu^{\Gamma_{z,S}}_1(\lambda)$, and it will be shown that they coincide with $F(R_{w_n})(\lambda)$ and $F(R_w)(\lambda)$ respectively:\begin{align*}
\mu^{\Gamma_{z,S}}_1(\lambda) &= \inp{E^{M}_{[0,\lambda^2]}(\delta_1)}{\delta_1} = \vontrace{G}{E^M_{[0,\lambda^2]}}\\
&= \vontrace{G}{E^{R_{z}}_{[0,\lambda^2]}} = F(R_w)(\lambda)\\
\bigskip\\
\mu^{\Gamma_{z_n,\pi_n(S)}}_{1_n}(\lambda) &= \inp{E^{M_n}_{[0,\lambda^2]}(\delta_{1_n})}{\delta_{1_n}} = \vontrace{G/K_n}{E^{M_n}_{[0,\lambda^2]}}\\
& = \vontrace{G/K_n}{E^{R_{z_n}}_{[0,\lambda^2]}} = F(R_{w_n})(\lambda)\\
\end{align*}

Thus, the proof is completed.


\end{proof}

\end{document}